\theoremstyle{plain}
  \newtheorem{theorem}{Theorem}[section]
  \newtheorem{corollary}[theorem]{Corollary}
  \newtheorem{lemma}[theorem]{Lemma}
  \newtheorem{proposition}[theorem]{Proposition}
\newtheorem{conjecture}[theorem]{Conjecture}
\theoremstyle{definition}
  \newtheorem{definition}[theorem]{Definition}
  \newtheorem{example}[theorem]{Example}
  \newtheorem{remark}[theorem]{Remark}
\newcommand{\ra}{\longrightarrow}
\def\co{\colon\thinspace}
\newcommand{\op}{\mathrm{op}}
\newcommand{\K}{\mathcal{K}}
\newcommand{\TC}{\mathrm{TC}}
\newcommand{\SC}{\mathrm{SC}}
\newcommand{\CC}{\mathrm{CC}}
\newcommand{\cat}{\mathrm{cat}}
\newcommand{\pr}{\mathrm{pr}}
\newcommand{\sd}{\mathrm{sd}}
\newcommand{\calS}{\mathcal{S}}
\newtheorem{maina}{Main Theorem}
  \newtheorem{mainb}{Main Theorem}
\begin{document}
\author{Kohei Tanaka}
\address{Institute of Social Sciences, School of Humanities and Social Sciences, Academic Assembly, Shinshu University, Japan.}
\email{tanaka@shinshu-u.ac.jp}
\title {A combinatorial description of topological complexity for finite spaces}
\thanks{This work was supported by JSPS KAKENHI Grant Number JP15K17535 and JP17K14183.}

\maketitle
{\footnotesize 2010 Mathematics Subject Classification : 55P10, 06A07}

{\footnotesize Keywords : topological complexity, finite space, order complex}

\begin{abstract}
This paper presents a combinatorial analog of topological complexity for finite spaces.
We demonstrate that this coincides with the genuine topological complexity of the original finite space, and constitutes an upper bound for the topological complexity of its order complex.
Furthermore, we examine the case of the iterated barycentric subdivision of finite spaces and the relation with the simplicial complexity for simiplicial complexes.
\end{abstract}

\section{Introduction}

The \emph{topological complexity} $\TC(X)$ of a space $X$ is a homotopy invariant that measures how complex the space is. This invariant was introduced by Farber in the study of robotics motion planning \cite{Far03}. Let us recall briefly the definition. We deal only with path--connected spaces throughout this paper.

\begin{definition}
Let $X$ be a space, and let $X^{I}$ be the path space of $X$ consisting of continuous paths $\gamma \co I=[0,1] \to X$. The \emph{path fibration} $p \co X^{I} \to X \times X$ is defined by $p(\gamma)=(\gamma(0),\gamma(1))$.
The \emph{topological complexity} $\TC(X)$ is the smallest non--negative integer $n$ such that there exists an open cover $\{U_{i}\}_{i=1}^{n}$ of $X \times X$ with a continuous section $s_{i} \co U_{i} \to X^{I}$ of the path fibration $p$ for each $i$. If such $n$ does not exist, then we define $\TC(X)=\infty$.
\end{definition}

This is a special case of a {\em sectional category} or {\em Schwarz genus} \cite{Sch61} for the path fibration.
In this paper, we focus on the topological complexity for finite $T_0$ spaces, which are regarded as finite partially ordered sets (posets for short) \cite{Sto66}. We introduce another invariant $\CC(P)$, called the {\em combinatorial complexity} for a finite space $P$ using purely combinatorial terms. Our main aim is to show the following equality.

\begin{maina}[Theorem \ref{TC=CC}]
For any finite space $P$, it holds that $\TC(P) = \CC(P)$.
\end{maina}

This theorem suggests that the topological complexity of a finite space can be calculated using combinatorial methods. 

On the other hand, a finite simplicial complex $\K(P)$ can be associated to any finite space $P$, called the \emph{order complex}. The combinatorial complexity $\CC(P)$ is an upper bound for the topological complexity $\TC(|\K(P)|)$ of the geometric realization $|\K(P)|$ of the order complex. However, it does not give a good estimate of $\TC(|\K (P)|)$.
For example, the minimal finite space model $P$ of a circle $S^1$ yields $\CC(P)=4$, whereas $\TC(|\K (P)|)=\TC(S^1)=2$ (see Example \ref{sphere}). This results from a small number of open sets of the product $P \times P$. To fix the problem we define $\CC^k(P)$, a notion of complexity which involves the $k$th barycentric subdivision of $P$.
This idea is based on Gonz\'alez's simplicial approach to topological complexity for simplicial complexes \cite{Gon}. He introduced an invariant $\SC(K)$ for a finite simplicial complex $K$, called the {\em simplicial complexity}, and proved the equality $\SC(K)=\TC(|K|)$. This paper relates $\CC^{\infty}(P) = \lim_{k \to \infty} \CC^k(P)$ with the simplicial complexity $\SC(\K(P))$ of the order complex of a finite space $P$.

\begin{mainb}[Theorem \ref{CC=SC}]
For any finite space $P$, it holds that $\CC^{\infty}(P) = \SC(\K (P))$.
\end{mainb}

By Gonz\'alez's result, we obtain the equality $\CC^{\infty}(P) = \TC(|\K (P)|)$.
This implies that the topological complexity of $|\K (P)|$ can be described in purely combinatorial terms.

The remainder of this paper is organized as follows.
Section 2 provides the definition of combinatorial complexity for finite spaces, including some basic homotopical properties of finite spaces.
In Section 3, we prove the equality between the combinatorial and topological complexity of a finite space. Section 4 develops the idea of combinatorial complexity using barycentric subdivision. We introduce $\CC^{\infty}(P)$ and show the equality between $\CC^{\infty}(P)$ and $\SC(\K(P))$ for any finite space $P$.


\section{Combinatorial complexity for finite spaces}
This paper focuses on finite topological spaces, i.e. spaces consisting of a finite set of points.
We often consider a finite topological space as a discrete space, because every finite $T_{1}$ space must be discrete. In contrast, finite $T_{0}$ spaces play an important role in homotopy theory for finite complexes. In particular, every finite simplicial complex has the weak homotopy type of a finite $T_{0}$ space. 
On the other hand, $T_0$--Alexandroff spaces are closely related to posets. 
Here, an Alexandroff space is a space such that an arbitrary intersection of open sets is open.
A $T_0$--Alexandroff space is equipped with a partial order $x \leq y$ defined by $x \in U_{y}$, where $U_{y}$ is the smallest open set containing $y$. 
Conversely, a poset is equipped with the Alexandroff topology generated from its ideals.
Here, an ideal of a poset $P$ is a subset $Q$ satisfying $x \in Q$ whenever $x \leq y$ for some $y \in Q$.
From the above viewpoint, we can identify a $T_0$--Alexandroff space with a poset. 
In particular, a finite $T_0$ space can be regarded as a finite poset. Let us simply call this a \emph{finite space}. 

Now, we propose a combinatorial analog of topological complexity for finite spaces.
Let $J_m$ denote the finite space consisting of $m+1$ points with the zigzag order,
\[
0 < 1 > 2 < \cdots > (<) m.
\]
This finite space is called the \emph{finite fence} with length $m$, and behaves as an interval in terms of finite spaces. We refer the readers to \cite{Sto66} and \cite{Bar11} for the homotopy theory of finite spaces.
A finite space $P$ is path--connected if and only if for any $x,y \in P$, there exists $m \geq 0$ and a continuous map $\gamma \co J_{m} \to P$ such that $\gamma(0)=x$ and $\gamma(m)=y$. More generally, two maps $f,g \co P \to Q$ between finite spaces are homotopic if and only if there exists $m \geq 0$ and a continuous map $H \co P \times J_{m} \to Q$ such that $H_0=f$ and $H_m = g$. By the exponential law, this is equivalent to considering a continuous map $H' \co J_m \to Q^P$ such that $H'(0)=f$ and $H'(m)=g$.

\begin{definition}
Let $P$ be a finite space. A \emph{combinatorial path} of $P$ with length $m$ is a continuous map $\gamma \co J_m \to P$. Note that a map between finite spaces is continuous if and only if it is order--preserving (a poset map). For this reason, a combinatorial path is a zigzag sequence in $P$ formed as follows:
\[
x_0 \leq x_1 \geq x_2 \leq \cdots \geq (\leq) x_m.
\]
Let $P^{J_m}$ denote the finite space of combinatorial paths of $P$ with length $m$, equipped with the pointwise order.
For a combinatorial path $\gamma \in P^{J_m}$, the inverse path $\gamma^{-1}$ is defined by the path
\[
\gamma(m) \leq \gamma(m-1) \geq \cdots \geq \gamma(0)
\]
with length $m$ if $m$ is even. When $m$ is odd, $\gamma^{-1}$ is the path
\[
\gamma(m) \leq \gamma(m) \geq \gamma(m-1) \leq \cdots \geq \gamma(0)
\]
with length $m+1$. For two combinatorial paths $\gamma \in P^{J_m}$ and $\delta \in P^{J_\ell}$ with $\gamma(m)=\delta(0)$, the concatenation $\gamma * \delta$ is defined by the path
\[
\gamma(0) \leq \gamma(1) \geq \cdots \geq \gamma(m) = \delta(0) \leq \delta(1) \geq \cdots \geq (\leq) \delta(\ell)
\]
with length $m+\ell$ if $m$ is even. When $m$ is odd,  $\gamma * \delta$ is the path
\[
\gamma(0) \leq \gamma(1) \geq \cdots \leq \gamma(m) \geq \gamma(m) = \delta(0) \leq \delta(1) \geq \cdots \geq (\leq) \delta(\ell).
\]
with length $m+\ell+1$.

\end{definition}

Note that the Alexandroff topology on $P^{J_m}$ coincides with the compact open topology, by Stong's result \cite[Proposition 9]{Sto66}. As an analog of path fibration, it is equipped with the canonical continuous map $q_{m} \co P^{J_m} \to P \times P$ given by $q_{m}(\gamma) = (\gamma(0),\gamma(m))$ for each $m \geq 0$.

\begin{definition}
Let $P$ be a finite space. For $m \geq 0$, define $\CC_{m}(P)$ as the smallest non--negative integer $n$ such that there exists an open cover $\{Q_{i}\}_{i=1}^{n}$ of $P \times P$ with a continuous section $s_{i} \co Q_{i} \to P^{J_m}$ of $q_{m}$ for each $i$. If such $n$ does not exist, then we define $\CC_m(P)=\infty$.
\end{definition}

\begin{lemma}\label{m+1<m}
For any $m \geq 0$ and a finite space $P$, it holds that $\CC_{m+1}(P) \leq \CC_{m}(P)$.
\end{lemma}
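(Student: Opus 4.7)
The plan is to construct, given any section to paths of length $m$, a corresponding section to paths of length $m+1$ by ``stalling'' the path at its endpoint for one extra step. Since the same open cover works in both cases, this immediately gives $\CC_{m+1}(P) \leq \CC_m(P)$.

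More precisely, I would first define an extension map $e : P^{J_m} \to P^{J_{m+1}}$ by setting $e(\gamma)(i) = \gamma(i)$ for $0 \leq i \leq m$ and $e(\gamma)(m+1) = \gamma(m)$. The verification that $e(\gamma)$ lies in $P^{J_{m+1}}$ amounts to checking one new order relation between coordinates $m$ and $m+1$ in $J_{m+1}$: regardless of the parity of $m$ (which dictates whether the required relation is $\leq$ or $\geq$), the assignment $e(\gamma)(m+1) = e(\gamma)(m)$ satisfies it trivially. Continuity (i.e.\ monotonicity) of $e$ is immediate from the definition, since pointwise comparison on $J_m$ transfers to pointwise comparison on $J_{m+1}$.

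Next I would observe that $q_{m+1} \circ e = q_m$, because $e(\gamma)(0) = \gamma(0)$ and $e(\gamma)(m+1) = \gamma(m)$. Thus, given any open cover $\{Q_i\}_{i=1}^{n}$ of $P \times P$ with continuous sections $s_i : Q_i \to P^{J_m}$ of $q_m$ (so $n = \CC_m(P)$), the compositions $e \circ s_i : Q_i \to P^{J_{m+1}}$ are continuous and satisfy $q_{m+1} \circ (e \circ s_i) = q_m \circ s_i = \mathrm{incl}_{Q_i}$, hence are sections of $q_{m+1}$ over the same cover. This yields $\CC_{m+1}(P) \leq n = \CC_m(P)$, handling also the edge case $\CC_m(P) = \infty$ vacuously.

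There is no real obstacle here; the only point that requires a moment's thought is that the new edge of the fence $J_{m+1}$ can be filled by duplicating the endpoint, which works because finite spaces are posets and $\leq$ is reflexive. Everything else is formal.
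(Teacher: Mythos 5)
Your proposal is correct and is essentially the paper's own argument: your extension map $e$ is exactly the map $r^*$ induced by precomposition with the retraction $r : J_{m+1} \to J_m$ sending $m+1$ to $m$, and the commutativity $q_{m+1} \circ e = q_m$ is the same triangle the paper uses. The only difference is that you define the map explicitly on coordinates while the paper packages it as $r^*$; the content is identical.
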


\begin{proof}
Assume that $\CC_{m}(P)=n$. Then, there exists an open cover $\{Q_{i}\}_{i=1}^{n}$ of $P \times P$ with a continuous section $s_{i} \co Q_{i} \to P^{J_{m}}$ of $q_{m}$ for each $i$.
The retraction $r \co J_{m+1} \to J_{m}$ sending $m+1$ to $m$ induces a map $r^* \co P^{J_{m}} \to P^{J_{m+1}}$ such that the following diagram commutes:
\[
\xymatrix{
P^{J_{m}} \ar[dr]_{q_{m}} \ar[rr]^{r^*} && P^{J_{m+1}} \ar[dl]^{q_{m+1}} \\
& P \times P.
}
\]
The composition $r^* \circ s_{i} \co Q_{i} \to P^{J_{m+1}}$ is a continuous section of $q_{m+1}$ for each $i$. Thus, $\CC_{m+1}(P) \leq n$.
\end{proof}

The topological complexity is closely related to the Lusternik--Schnirelmann (LS) category of a space.
The LS category $\cat(X)$ of a space $X$ is the smallest non--negative integer $n$ such that $X$ can be covered by $n$ open subspaces that are contractible in $X$. Although one often uses the reduced version, which is one less than the definition above, we use the unreduced version throughout this paper.
Farber proved that the following inequalities hold for any space $X$ (see \cite[Theorem 5]{Far03}):
\[
\cat(X) \leq \TC(X) \leq \cat(X \times X). 
\]
Moreover, the product inequality of LS category implies that $\cat(X \times X) \leq 2 \cat(X)-1$ when $X$ is paracompact and Hausdorff.
Let us consider the case of finite spaces.

\begin{lemma}\label{cat}
For any finite space $P$, it holds that $\CC_{m}(P) \leq \cat(P \times P)$ for sufficiently large $m \geq 0$.
\end{lemma}

\begin{proof}
Assume that $\cat(P \times P)=n$. Then, there exists a contractible open cover $\{Q_{i}\}_{i=1}^{n}$ of $P \times P$.
For each $i$, fix an element $(x_1,x_2) \in Q_{i}$ and a path $\gamma \co J_k \to P$ between $x_1$ and $x_2$, 
and choose a contracting homotopy $H \co Q_i \times J_\ell \to P \times P$ onto $(x_1,x_2)$.
The first and second projections yield two maps, $H_1, H_2 \co Q_i \to P^{J_{\ell}}$,
such that $H_{j}(a_1,a_2)(0)=a_j$ and $H_{j}(a_1,a_2)(\ell)=x_j$ for $j=1,2$. 
A continuous section $s_i \co Q_{i} \to P^{J_{m_i}}$ of $q_{m_i}$ is defined by the concatenation of paths $s_{i}(a_1,a_2)=H_{1}(a_1,a_2) * \gamma * H_2(a_1,a_2)^{-1}$, where $m_i$ is the length of $s_i(a_1,a_2)$. Note that $m_i$ depends only on $k$ and $\ell$, not $a_1$ and $a_2$.
Hence, we obtain a continuous section $Q_i \to P^{J_m}$ of $q_m$ for $m=\max \{m_i\}_{i=1}^{n}$. Thus, $\CC_m(P) \leq n$.
\end{proof}

Lemma \ref{cat} can also be deduced from Theorem \ref{TC=CC} below.
We can easily check that $\cat(P \times P) \leq \cat(P)^2$. Hence, we obtain the following corollary.

\begin{corollary}\label{max}
For any finite space $P$, it holds that $\CC_{m}(P) \leq  \cat(P)^2$ for sufficiently large $m \geq 0$.
\end{corollary}

\begin{remark}\label{remark}
Note that the product formula of LS category
\[
\cat(P \times Q) \leq \cat(P) +\cat(Q) -1
\] 
does \emph{not} hold in general for finite spaces $P,Q$. This requires the spaces to be paracompact Hausdorff spaces (occasionally a paracompact space is defined such that it is always Hausdorff) with a partition of unity for every finite open cover (see \cite[Proposition 2.3]{Jam78}). For this reason, the equality $\cat(P \times P) \leq 2 \cat(P) -1$ does not hold for an arbitrary finite space $P$. For example, let $P$ be the finite space consisting of four points described as the following Hasse diagram:
\[
\xymatrix{
\bullet \ar@{-}[d] \ar@{-}[rd]& \bullet \ar@{-}[d] \ar@{-}[ld] \\ 
\bullet & \bullet
}
\]
We can verify that $\cat(P)=2$, however $\cat(P \times P)=4$ (see Corollary \ref{inequalities} and Example \ref{height_1}). 
\end{remark}

Lemma \ref{m+1<m} and Corollary \ref{max} allow us to define the notion of combinatorial complexity for finite spaces.

\begin{definition}
Let $P$ be a finite space.
The \emph{combinatorial complexity} $\CC(P)$ is the minimum of $\CC_m(P)$:
\[
\CC(P) = \min_{m \geq 1} \{\CC_m(P)\} = \lim_{m \to \infty} \CC_m(P) < \infty.
\]
\end{definition}


\section{Topological and combinatorial complexity of a finite space}

A finite simplicial complex can be associated to any finite space, called the \emph{order complex}.

\begin{definition} \label{KF}
Let $P$ be a finite space. The \emph{order complex} $\K(P)$ is the simplicial complex whose $n$--simplices are linearly ordered subsets of $P$.
Its geometric realization is denoted by $|\K(P)|$.
\end{definition}

Let us examine the relationship between topological and combinatorial complexity.

\begin{theorem}\label{TC=CC}
For any finite space $P$, it holds that $\TC(P) = \CC(P)$.
\end{theorem}

\begin{proof}
We first show the inequality $\TC(P) \geq \CC(P)$. 
Assume that $\TC(P)=n$ with an open cover $\{Q_{i}\}_{i=1}^{n}$ of $P \times P$ and a continuous section $Q_{i} \to P^{I}$ for each $i$. It induces a map $I \to {P}^{Q_i}$ by the exponential law. Hence, we obtain a map $J_{m} \to {P}^{Q_i}$ for some $m \geq 0$ by the homotopy theory of finite spaces, and it induces a combinatorial section $Q_{i} \to P^{J_m}$. It implies that $\CC(P) \leq n$.
Let us show the converse inequality. Assume that $\CC(P)=n$ with an open cover $\{Q_{i}\}_{i=1}^{n}$ of $P \times P$ and a continuous section $s_{i} \co Q_{i} \to P^{J_m}$ of $q_m$ for each $i$ and some $m \geq 0$. Let $\alpha_{m} \co [0,m] \cong |\K(J_{m})| \to J_{m}$ denote the canonical map given by McCord \cite{McC66}. This map is defined by 
\[
\alpha_{m}(t)=
\begin{cases}
2k-1 & (t=2k-1), \\
2k & (2k-1 < t < 2k+1), \\
\end{cases}
\]
for $k=0,1,\ldots$. In particular, this map preserves both ends, i.e., $\alpha_{m}(0)=0$ and $\alpha_{m}(m)=m$. Let $\beta \co I \to J_m$ denote the composition of $\alpha_m$ and the canonical $m$--times isomorphism $I =[0,1] \cong [0,m]$. 
This induces $\beta^* \co P^{J_m} \to P^I$, such that the following diagram is commutative:
\[
\xymatrix{
P^{J_m} \ar[dr]_{q_m} \ar[rr]^{\beta^{*}} && P^{I} \ar[dl]^{p} \\
& P \times P.
}
\]
The composition $\beta^* \circ s_i \co Q_i \to P^I$ is a continuous section of the path fibration $p$ for each $i$. Thus, $\TC(P) \leq n$.
\end{proof}

\begin{corollary}
The following basic homotopical properties hold for combinatorial complexity. 
These have been shown in \cite{Far03} as properties for topological complexity.
\begin{itemize}
\item A finite space $P$ is contractible if and only if $\CC(P)=1$.
\item Combinatorial complexity depends only on the homotopy type of finite spaces, i.e., $\CC(P)=\CC(Q)$ if two finite spaces $P$ and $Q$ are homotopy equivalent (in other words, they have isomorphic cores \cite{Sto66}).
\end{itemize}
\end{corollary}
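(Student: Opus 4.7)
The plan is to derive both bullets as formal consequences of the main equality $\CC(P)=\TC(P)$ established in Theorem \ref{TC=CC}, combined with the analogous properties of topological complexity proved by Farber in \cite{Far03}. Because a finite $T_0$ space carries the Alexandorff topology and is a genuine topological space, Farber's results apply verbatim to $P$; the only subtlety to keep in mind is that the notions of ``contractible'' and ``homotopy equivalent'' used in the two formulations need to match, which is precisely what the preliminary material of Section 2 has set up via the exponential law and the finite fences $J_m$.

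For the first bullet, I would recall Farber's result that $\TC(X)=1$ if and only if $X$ is contractible. Combined with $\CC(P)=\TC(P)$, this gives $\CC(P)=1$ iff $P$ is contractible as a topological space. The discussion after Definition \ref{KF} explains that a homotopy $H : P \times I \to P$ (in the ordinary sense) can always be replaced by a fence-homotopy $H' : P \times J_m \to P$ and vice versa, so ``contractible finite space'' in the combinatorial sense coincides with ``contractible topological space.''

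For the second bullet, Farber proves that $\TC$ is an invariant of homotopy type. Suppose $P$ and $Q$ are homotopy equivalent finite spaces; then the fence-homotopies witnessing this equivalence produce, via the same exponential-law comparison used in Section 2, an honest homotopy equivalence of the underlying topological spaces. Hence $\TC(P)=\TC(Q)$, and applying Theorem \ref{TC=CC} on both sides yields $\CC(P)=\CC(Q)$. The parenthetical remark about cores is then just a restatement of Stong's theorem \cite{Sto66}, which identifies the homotopy classification of finite spaces with the isomorphism classification of their cores.

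The hard part is essentially non-existent: both statements are formal transfers from $\TC$ to $\CC$ through Theorem \ref{TC=CC}. The one place worth a careful sentence, rather than a calculation, is the compatibility between the combinatorial notion of homotopy (via $J_m$) and the ordinary topological notion (via $I$); but this compatibility has already been recorded in Section 2, so in the actual proof I would just cite it and move on.
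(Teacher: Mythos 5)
Your proposal is correct and matches the paper's (implicit) argument exactly: the corollary is stated without proof precisely because both bullets follow formally from Theorem \ref{TC=CC} together with Farber's corresponding results for $\TC$, with the equivalence of fence-homotopies and ordinary homotopies for finite spaces already recorded in Section 2. Nothing further is needed.
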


The next corollary follows from combining Theorem \ref{TC=CC} with \cite[Theorem 5]{Far03}. 
Note that Farber proved $\cat(X) \leq \TC(X)$ without requiring $X$ to be Hausdorff.

\begin{corollary}\label{inequalities}
For any finite space $P$, the following inequalities hold:
\[
\cat(P) \leq \TC(P) = \CC(P) \leq \cat(P \times P) \leq \cat(P)^2.
\]
\end{corollary}

\begin{example}\label{height_1}
Let $P$ be a finite space consisting of $n+m$ points $\{x_1, \ldots, x_n, y_1, \ldots, y_m\}$, where $n,m \geq 2$, with the partial order described by the following:
\[
\xymatrix{
x_1 \ar@{-}[d] & x_{2} \ar@{-}[d]& \cdots \ar@{-}[d] & x_n \ar@{-}[d]\\
y_1 \ar@{-}[ru] \ar@{-}[rru] \ar@{-}[rrru]& y_2 \ar@{-}[lu] \ar@{-}[ru] \ar@{-}[rru] & \cdots \ar@{-}[llu] \ar@{-}[lu] \ar@{-}[ru]& y_m \ar@{-}[lllu] \ar@{-}[llu] \ar@{-}[lu]
}
\]
That is, $x_i > y_j$ for any $i,j$. Then we have $\CC(P)=\cat(P \times P)=\cat(P)^2=n^2$.
\end{example}

\begin{proof}
Consider the prime ideal $\{ a \in P\mid a \leq x_i\}$ at each maximal point $x_i$ in $P$.
These are contractible and constitute a cover of $P$, and hence $\cat(P) \leq n$. Corollary \ref{inequalities} shows that $\CC(P) \leq \cat(P)^2 \leq n^2$.
Assume that $\CC(P)=k < n^2$. There exist $k$ open sets covering $P \times P$ with local sections. We can find among them an open set $U$ which contains at least two distinguished maximal points $(x_{i_1},x_{i_2})$ and $(x_{i_3},x_{i_4})$ in $P \times P$. We may assume that $x_{i_2} \neq x_{i_4}$. Fix two distinguished points $y$ and $y'$ in $\{y_1, \ldots, y_m\}$ in $P$.
By Theorem 5 of \cite{Far03}, the open set $V$ in $P$ such that $\{y\} \times V = U \cap (\{y\} \times P)$ is contractible in $P$. However, $V$ is a finite space with height 1 including a loop formed by $x_{i_2} > y < x_{i_4} > y' <x_{i_2}$. This is a contradiction. 
\end{proof}

For a finite space $P$, 
the \emph{opposite space} $P^{\op}$ is the finite space consisting of the same underlying set as $P$ with the reversed partial order of $P$.
Example \ref{height_1} implies the following corollary.

\begin{remark}
In general $\CC(P) \neq \CC(P^{\op})$.
\end{remark}

We will examine the case of the minimal finite model of a sphere.
Let $\calS^n$ denote the finite space consisting of $2n+2$ points $\{e_+^0, e_{-}^0, \ldots, e^n_+, e^n_-\}$ with the partial order defined by $e^k_{p} < e^{\ell}_{q}$ for $k < \ell$ and  $p,q \in \{+,-\}$.
The realization of the order complex $|\K(\calS^n)|$ is homeomorphic to the sphere $S^n$ with dimension $n$. 
We have seen the combinatorial complexity of $\calS^1$ as the case of $n=m=2$ in Example \ref{height_1}. This argument can be extended to the general case of $\calS^n$.

\begin{example}\label{sphere}
We have $\CC(\calS^n)=\cat(\calS^n \times \calS^n) = \cat(\calS^n)^2=4$ for any $n \geq 1$.
\end{example}

\begin{proof}
Corollary \ref{inequalities} shows that $\CC(\calS^n) \leq \cat(\calS^n)^2 \leq 4$.
We use a similar argument as in the proof of Example \ref{height_1}.
If $\CC(\calS^n)<4$, then there is an open set of $\calS^n \times \calS^n$ containing at least two distinguished maximal points. It yields a contractible open set in $\calS^n$ containing $e^n_+$ and $e^n_-$. This coincides with the entire space $\calS^n$, however, this is not contractible. The contradiction implies that $\CC(\calS^n) = 4$.
\end{proof}

A similar argument can be adapted to the product of $\calS^n$.

\begin{example}\label{product}
We have $\CC(\calS^n \times \calS^n)=\cat(\calS^n \times \calS^n)^2=\cat(\calS^n)^4=16$ for any $n \geq 1$.
\end{example}

\begin{remark}\label{product_fail}
The product inequality for the topological complexity is a useful tool \cite[Theorem 11]{Far03}. It is described as follows:
\[
\TC(X \times Y) \leq \TC(X) + \TC(Y)-1
\]
for nice Hausdorff spaces $X$ and $Y$. 
Example \ref{product} proves that this inequality does not hold for finite spaces in general.
\end{remark}

Let us consider another combinatorial model of a circle.

\begin{example}\label{dDelta2}
Let $P$ be a finite space consisting of six points with the following Hasse diagram:
\[
\xymatrix{
x_1 \ar@{-}[rd] & x_2 \ar@{-}[rd]& x_3 \ar@{-}[lld] \\
y_1 \ar@{-}[u] & y_2 \ar@{-}[u] & y_3 \ar@{-}[u]
}
\]
Then we have $\CC(P)=3$.
\end{example}

\begin{proof}
The product $P \times P$ can be described as the face poset of the cell decomposition on torus (see Figure \ref{figure1}), where the opposite sides of the boundary of the maximal square are identified.

	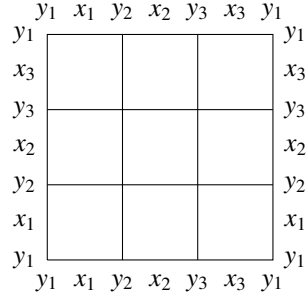
\begin{figure}[ht]
	\begin{center}
   \begin{tikzpicture}
\draw (0,0) -- (3,0) -- (3,3) -- (0,3) -- (0,0);
\draw (1,0) -- (1,3);
\draw (2,0) -- (2,3);
\draw (0,1) -- (3,1);
\draw (0,2) -- (3,2);
\draw (0,-0.3) node {$y_1$};
\draw (0.5,-0.3) node {$x_1$};
\draw (1,-0.3) node {$y_2$};
\draw (1.5,-0.3) node {$x_2$};
\draw (2,-0.3) node {$y_3$};
\draw (2.5,-0.3) node {$x_3$};
\draw (3,-0.3) node {$y_1$};
\draw (-0.3,0) node {$y_1$};
\draw (-0.3,0.5) node {$x_1$};
\draw (-0.3,1) node {$y_2$};
\draw (-0.3,1.5) node {$x_2$};
\draw (-0.3,2) node {$y_3$};
\draw (-0.3,2.5) node {$x_3$};
\draw (-0.3,3) node {$y_1$};

\draw (0,3.3) node {$y_1$};
\draw (0.5,3.3) node {$x_1$};
\draw (1,3.3) node {$y_2$};
\draw (1.5,3.3) node {$x_2$};
\draw (2,3.3) node {$y_3$};
\draw (2.5,3.3) node {$x_3$};
\draw (3,3.3) node {$y_1$};
\draw (3.3,0) node {$y_1$};
\draw (3.3,0.5) node {$x_1$};
\draw (3.3,1) node {$y_2$};
\draw (3.3,1.5) node {$x_2$};
\draw (3.3,2) node {$y_3$};
\draw (3.3,2.5) node {$x_3$};
\draw (3.3,3) node {$y_1$};
  \end{tikzpicture}  
  \end{center}
  \caption{Cell decomposition on torus}
  \label{figure1}
 \end{figure}
 
We can find three collapsible subcomplexes (see Figure \ref{figure2}). In general this does not imply that their face posets are contractible. But in this case they are.
It implies that $\CC(P) \leq \cat(P \times P) \leq 3$ by Corollary \ref{inequalities}.

	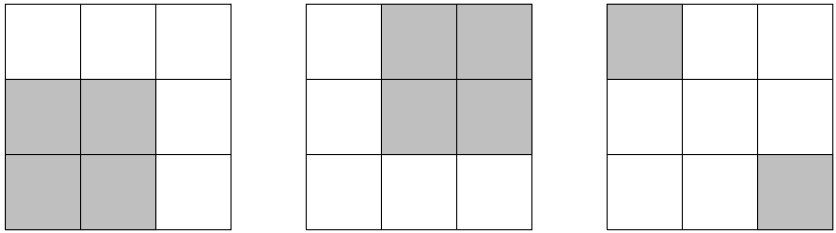
\begin{figure}[ht]
	\begin{center}
   \begin{tikzpicture}
	\draw [fill,lightgray] (0,0) -- (2,0) -- (2,2) -- (0,2) --(0,0);
	\draw (0,0) -- (3,0) -- (3,3) -- (0,3) -- (0,0);
	\draw (1,0) -- (1,3);
	\draw (2,0) -- (2,3);
	\draw (0,1) -- (3,1);
	\draw (0,2) -- (3,2);

	\draw [fill,lightgray] (5,1) -- (7,1) -- (7,3) -- (5,3) --(5,1);
	\draw (4,0) -- (7,0) -- (7,3) -- (4,3) -- (4,0);
	\draw (5,0) -- (5,3);
	\draw (6,0) -- (6,3);
	\draw (4,1) -- (7,1);
	\draw (4,2) -- (7,2);

	\draw [fill,lightgray] (8,2) -- (8,3) -- (9,3) -- (9,2) --(8,2);
	\draw [fill,lightgray] (10,0) -- (11,0) -- (11,1) -- (10,1) --(10,0);
	\draw (8,0) -- (11,0) -- (11,3) -- (8,3) -- (8,0);
	\draw (9,0) -- (9,3);
	\draw (10,0) -- (10,3);
	\draw (8,1) -- (11,1);
	\draw (8,2) -- (11,2);

  \end{tikzpicture}  
  \end{center}
  \caption{Three collapsible subcomplexes covering torus}
  \label{figure2}
 \end{figure}

The space $P$ is not contractible itself, hence $\CC(P)=2$ or $3$.
Let us assume that $\CC(P)=2$ with two open sets covering $P \times P$ and admitting continuous sections. 
Either one $U$ must contains at least five maximal points. This can be described as the face poset of a subcomplex $\mathcal{U}$ of the torus in Figure \ref{figure1}, which contains at least five 2--cells. We will show that $U$ includes either a horizontal slice
\[
(y_1,z)<(x_1,z)>(y_2,z)<(x_2,z)>(y_3,z)<(x_3,z)>(y_1,z),
\]
or a vertical slice
\[
(z,y_1)<(z,x_1)>(z,y_2)<(z,x_2)>(z,y_3)<(z,x_3)>(z,y_1),
\]
for some $z \in P$. Since $\mathcal{U}$ occupies five out of nine $2$--cells, at least two $2$--cells have a common $1$--cell as the boundary. Without loss of generality, we may assume that $\mathcal{U}$ contains the $2$--cells $(x_1,x_1)$ and $(x_2,x_1)$. If $\mathcal{U}$ contains $(x_3,x_1)$ or $(x_3,x_2)$ or $(x_3,x_3)$, then $U$ includes a horizontal slice. On the other hand, if $\mathcal{U}$ contains three out of the four $2$--cells $(x_1,x_2)$, $(x_2,x_2)$, $(x_1,x_3)$, and $(x_2,x_3)$, then $U$ includes a vertical slice. 
The restriction of a continuous section on $U$ to this loop provides a homotopy between the identity $1_P \co P \to P$ and the constant map $z \co P \to P$ onto an element $z$. However, these must be equal by Theorem 3 in \cite{Sto66}, since $P$ is minimal. From this contradiction, we conclude that $\CC(P)=3$.
\end{proof}

For the above $P$, we can easily check that $\cat(P)=2$. This example satisfies the strict inequality $\cat(P \times P) < \cat(P)^2$, however, $\CC(P) = \cat(P \times P)$.
We have not found a finite space $P$ satisfying the strict inequality $\CC(P) < \cat(P \times P)$.

\begin{conjecture}
There exists a finite space $P$ satisfying the strict inequality $\CC(P) < \cat(P \times P)$.
\end{conjecture}


\section{Combinatorial complexity and barycentric subdivision}

As we have seen in the previous section, all examples $\CC(P)$ attain to the upper bound $\cat(P \times P)$ and they do not give good estimates of $\TC(|\K(P)|)$. This results from a small amount of open sets of $P \times P$ compared with $|\K(P)| \times |\K(P)|$. To fix the problem, we extend the idea of $\CC(P)$ using the barycentric subdivision.

\begin{definition}
For a finite space $P$, the barycentric subdivision $\sd(P)$ of $P$ is defined as the face poset $\chi(\K(P))$ of the order complex $\K(P)$. In other words, $\sd(P)$ consists of sequences of ordered elements in $P$ with the subsequence order.
\end{definition}

Let $\tau_P \co \sd(P) \to P$ be the canonical map sending $p_0< \cdots < p_n$ to the last element $p_n$. This is a weak homotopy equivalence \cite{HV93}, and the induced simplicial map $\K(\tau_P) \co \K(\sd(P))=\sd(\K(P)) \to \K(P)$ is a simplicial approximation of the identity on $|\K(P)|$.
For $k \geq 0$, we denote $\tau_P^k \co \sd^k(P) \to P$ as the composition
\[
\sd^k(P) \stackrel{\tau_{\sd^{k-1}(P)}}{\ra} \sd^{k-1}(P) \stackrel{\tau_{\sd^{k-2}(P)}}{\ra} \cdots \stackrel{\tau_{\sd(P)}}{\ra} \sd(P) \stackrel{\tau_{P}}{\ra} P.
\]

\begin{definition}
Let $k \geq 0$ and let $P$ be a finite space. We define $\CC^k(P)$ as the smallest non--negative integer $n$ such that there exists an open cover $\{Q_i\}_{i=1}^{n}$ of $\sd^{k}(P \times P)$ with a map $s_i \co Q_i \to P^{J_m}$ such that $q_m \circ s_i = \tau^k_{P \times P}$ on $Q_i$ for each $i$ and some $m \geq 0$. We will call the maps $s_i$ local sections although they are not rigorously sections.
\end{definition}

Obviously, $\CC(P)=\CC^{0}(P)$ by the definition above.

\begin{lemma}
For any finite space $P$ and $k \geq 0$, it holds that $\CC^{k+1}(P) \leq \CC^{k}(P)$.
\end{lemma}

\begin{proof}
If $\CC^k(P)=n$, then we have an open cover $\{Q_i\}_{i=1}^{n}$ of $\sd^{k}(P \times P)$ with local sections $s_i \co Q_i \to P^{J_m}$. The open set $U_i=\tau^{-1}_{\sd^k(P \times P)}(Q_i)$ of $\sd^{k+1}(P \times P)$ has a local section $s_i \circ \tau_{\sd^k(P \times P)}$ by the following commutative diagram:
\[
\xymatrix{
U_i \ar[r]^-{\tau_{\sd^k(P \times P)}} \ar@{^{(}->}[d] & Q_i \ar[r]^-{s_i}\ar@{^{(}->}[d] & P^{J_m} \ar[d]^{q_m} \\
\sd^{k+1}(P \times P) \ar[r]_-{\tau_{\sd^k(P \times P)}} & \sd^k(P \times P) \ar[r]_-{\tau^k_{P \times P}}& P \times P.
}
\]
The family $\{U_i\}_{i=1}^n$ covers $\sd^{k+1}(P \times P)$, and $\CC^{k+1}(P) \leq n$. 
\end{proof}

The barycentric subdivision gives rise to a functor on the category of finite spaces.
For a continuous map $f  \co P \to Q$ between finite spaces $P$ and $Q$, the induced map $\sd(f) \co  \sd(P) \to \sd(Q)$ is given by $\sd(f)(S) = f(S)$ for a linearly ordered subset
\[
S=\{p_0,p_1,\ldots,p_n \mid p_0 < p_1 < \cdots <p_n\}
\]
of $P$. The map $\tau$ becomes a natural transformation from $\sd$ to the identity functor on finite spaces.
Moreover, the $k$th barycentric subdivision $\sd^k$ is also a functor and $\tau^k$ is a natural transformation from $\sd^k$ to the identity.
For two finite space $P$ and $Q$, we have the canonical map $\varphi \co \sd^k(P \times Q) \to \sd^k(P) \times \sd^k(Q)$ induced by the projections. By the naturality of $\tau^k$ and the universality of products, the following diagram is commutative:
\[
\xymatrix{
\sd^k(P \times Q) \ar[rd]_{\tau^k_{P \times Q}} \ar[rr]^-{\varphi} && \sd^k(P) \times \sd^k(Q) \ar[ld]^{\tau^k_{P} \times \tau^k_Q}\\
& P \times Q.
}
\]

\begin{lemma}\label{CCk<CCsdk}
For any finite space $P$ and $k \geq 0$, it holds that $\CC^{k}(P) \leq \CC(\sd^{k}(P))$.
\end{lemma}

\begin{proof}
If $\CC(\sd^k(P))=n$, then we have an open cover $\{Q_i\}_{i=1}^{n}$ of $\sd^k(P) \times \sd^k(P)$ with local sections $s_i \co Q_i \to \sd^{k}(P)^{J_m}$. The open cover $\{\varphi^{-1}(Q_i)\}_{i=1}^{n}$ of $\sd^{k}(P \times P)$ has local sections $(\tau^k_P)_* \circ s_i \circ \varphi$ by the following commutative diagram:
\[
\xymatrix{
\varphi^{-1}(Q_i) \ar[r]^-{\varphi} \ar@{^{(}->}[d] & Q_i \ar[r]^-{s_i}\ar@{^{(}->}[d] & \sd^k(P)^{J_m} \ar[r]^-{(\tau^k_P)_*} \ar[d]^{q_m}& P^{J_m} \ar[d]^{q_m} \\
\sd^{k}(P \times P) \ar[r]^-{\varphi} \ar@/_3ex/[rrr]_{\tau^k_{P \times P}}& \sd^k(P) \times \sd^k(P) \ar@{=}[r]& \sd^k(P) \times \sd^k(P) \ar[r]^-{\tau^k_P \times \tau^k_P}&P \times P,
}
\]
where $(\tau^k_P)_*$ is the canonical induced map on combinatorial path spaces by $\tau^k_P$.
Thus, $\CC^{k}(P) \leq n$.
\end{proof}

\begin{definition}
Let $P$ be a finite space. We define $\CC^{\infty}(P)$ as the minimum of $\CC^k(P)$:
\[
\CC^{\infty}(P)= \lim_{k \to \infty} \CC^{k}(P) = \min_{k \geq 0}\{\CC^{k}(P)\}.
\]
\end{definition}

\begin{remark}
For a finite space $P$ and $k \geq 0$, let $\rho_j \co \sd^{k}(P \times P) \to P$ denote the composition of $\tau^k_{P \times P} \co \sd^k(P \times P) \to P \times P$ and the $j$th projection for $j=1,2$. We have $\CC^{\infty}(P) \leq n$ if and only if there exist $k \geq 0$ and an open cover $\{Q_i\}_{i=1}^{n}$ of $\sd^{k}(P \times P)$ and a homotopy between the restrictions $\rho_1 \simeq \rho_2 \co Q_i \to P$ for each $i$.
\end{remark}

Now we examine the relation between $\CC^{\infty}(P)$ and the simplicial complexity of the order complex $\K(P)$. We first recall Gonz\'alez's original idea of simplicial complexity for simplicial complexes \cite{Gon}.

For a finite simplicial complex $K$, the barycentric subdivision $\sd(K)$ is isomorphic to the order complex $\K(\chi(K))$ of the face poset of $K$. Fix an order on the set of vertices $V$ of $K$ and consider the product $K \times K$ in the category of ordered simplicial complexes. 
Here, the set of vertices of $K \times K$ is $V \times V$ and a simplex of $K \times K$ is a subset
\[
S = \{(v_0,u_0), (v_1,v_1), \ldots, (v_n,u_n) \mid v_0 \leq v_1 \leq \cdots \leq v_n, u_0 \leq u_1 \leq \cdots \leq u_n\}
\]
of $V \times V$ such that both $\mathrm{pr}_1(S)$ and $\mathrm{pr}_2(S)$ are simplices of $K$, where $\mathrm{pr}_j$ is the projection on $V \times V$ on each factor for $j=1,2$. We choose a simplicial approximation $\iota_{\sd^{k}(K \times K)} \co \sd^{k+1}(K \times K) \to \sd^{k}(K \times K)$ of the identity on $|K| \times |K|$ for $k \geq 0$. 
We denote $\iota_{K \times K}^{k} \co \sd^k(K \times K) \to K \times K$ as the composition
\[
\sd^{k}(K \times K) \stackrel{\iota_{\sd^{k-1}(K \times K)}}{\ra} \sd^{k-1}(K \times K) \stackrel{\iota_{\sd^{k-2}(K \times K)}}{\ra} \cdots \stackrel{\iota_{\sd(K \times K)}}{\ra} \sd(K \times K) \stackrel{\iota_{K \times K}}{\ra} K \times K.
\]
Let $\pi_j \co \sd^{k}(K \times K) \to K$ denote the composition of $\iota^k_{K \times K} \co \sd^k(K \times K) \to K \times K$ and the $j$th projection for $j=1,2$.

\begin{definition}\label{simplicial_complexity}
Let $K$ be a finite simplicial complex. We define $\SC^{k}(K)$ as the smallest non--negative integer $n$ such that there exist subcomplexes $\{L_{i}\}_{i=1}^{n}$ covering $\sd^{k}(K \times K)$ and the restrictions $\pi_1, \pi_2 \co L_i \to K$ lie in the same contiguity class (see \cite{Spa66}) for each $i$. 
\end{definition}

Note that Gonz\'alez used the reduced version, which is one less than the above definition. We have the following decreasing sequence of numbers for a simplicial complex $K$:
\[
\SC^{0}(K) \geq \SC^{1}(K) \geq \SC^2(K) \geq \cdots\geq 0 .
\]

\begin{definition}[Definition 2.5 in \cite{Gon}]
For a finite simplicial complex $K$, the {\em simplicial complexity} $\SC(K)$ is defined as the minimum of $\SC^k(K)$:
\[
\SC(K)=\lim_{k \to \infty}\SC^{k}(K) = \min_{k \geq 0}\{\SC^{k}(K)\}.
\]
\end{definition}

Note that the above definition of simplicial complexity does not depend on the choice of approximations and ordering of vertices. 

We are interested in the case that $K=\K(P)$ for a finite space $P$.
We can choose a linear extension on $P$ (total order compatible with the partial order on $P$), and it numbers the vertices of $\K(P)$.
The product simplicial complex of the two copies of $\K(P)$ coincides with $\K(P \times P)$ (in the category of ordered simplicial complexes). For $k \geq 0$, the induced map
\[
\K\left(\tau_{\sd^k(P \times P)} \right) \co \sd^{k+1}(\K (P \times P))=\K(\sd^{k+1}(P \times P)) \to \K(\sd^k(P \times P)) = \sd^{k} (\K(P \times P))
\]
is an approximation of the identity of $|\K(P)| \times |\K(P)|$. Furthermore, we can choose
\[
\pi_j \co \sd^k(\K(P \times P)) = \K(\sd^k(P \times P)) \ra \K(P)
\]
as $\K(\pr_j \circ \tau^k_{P \times P})$, where $\pr_j$ is the projection of $P \times P$ on each factor for $j=1,2$. This will be used in the next proof.

\begin{theorem}\label{CC=SC}
For any finite space $P$, it holds that $\CC^{\infty}(P) = \SC(\K (P))$.
\end{theorem}

\begin{proof}
We assume that $\CC^{\infty}(P)=n$ with open sets $\{Q_i\}_{i=1}^{n}$ covering $\sd^{k}(P \times P)$ for some $k \geq 0$ and a homotopy $\rho_1 \simeq \rho_2 \co Q_i \to P$ for each $i$.
Proposition 4.11 of \cite{BM12} implies that $\K(\rho_1), \K(\rho_2) \co \K(Q_i) \to \K(P)$ lie in the same contiguity class. The subcomplexes $\K(Q_i)$ constitute a cover of  $\K(\sd^{k}(P \times P)) = \sd^{k}(\K(P \times P))$, and 
\[
\K(\rho_j)=\K(\pr_j \circ \tau^k_{P \times P})=\pi_j
\] 
for $j=1,2$. Thus, $\SC^k(\K (P)) \leq n$ and then $\SC(\K (P)) \leq n$.

Conversely, assume that $\SC(\K (P))=n$. Then $\SC^k(\K (P))=n$ for some $k \geq 0$ and a linear ordering of the vertices of $\K(P)$ extending the order of $P$. Let $\{L_i\}_{i=1}^{n}$ be a covering of $\sd^{k}(\K(P \times P))$ and the restrictions $\pi_1, \pi_2 \co  L_i \to \K(P)$ lie in the same contiguity class for each $i$.
Proposition 4.12 of \cite{BM12} implies that $\chi(\pi_1)$ and $\chi(\pi_2)$ are homotopic. Moreover, $\tau_{P} \circ \chi(\pi_1)$ and $\tau_{P} \circ \chi(\pi_2)$ are homotopic.
The subsets $\chi(L_i)$ constitute an open cover of $\chi(\sd^{k}(\K(P \times P))) = \sd^{k+1}(P \times P)$.
The naturality of $\tau$ makes the following diagram commute:
\[
\xymatrix{
\sd^{k+1}(P \times P) \ar[r]^-{\sd(\tau^k_{P \times P})} \ar[d]_{\tau_{\sd^k(P \times P)}}& \sd(P \times P) \ar[r]^-{\sd(\pr_j)} \ar[d]_{\tau_{P \times P}}& \sd(P) \ar[d]^{\tau_{P}} \\
\sd^k(P \times P) \ar[r]_-{\tau^k_{P \times P}} & P \times P \ar[r]_-{\pr_j} & P.
}
\]
We have
\begin{align*}
\tau_{P} \circ \chi(\pi_j)&=\tau_{P} \circ \chi \left(\K \left(\pr_j \circ \tau^k_{P \times P} \right) \right) \\
&=\tau_{P} \circ \sd (\pr_j \circ \tau^k_{P \times P} ) \\
&= \tau_{P} \circ \sd(\pr_j) \circ \sd(\tau^k_{P \times P}) \\
&= \pr_j \circ \tau^{k+1}_{P \times P}=\rho_j
\end{align*}
for $j=1,2$. Thus,  $\CC^{k+1}(P) \leq n$ and then $\CC^{\infty}(P) \leq n$.
\end{proof}

The next corollary follows from Gonz\'alez's result \cite[Theorem 2.6]{Gon}.

\begin{corollary}\label{CC=TC}
For any finite space $P$, it holds that $\CC^{\infty}(P)=\TC(|\K(P)|)$.
\end{corollary}

The above corollary implies that the topological complexity of the geometric realization of the order complex of a finite space $P$ can be computed in combinatorial terms of $P$. As a result, we have the following relation for a finite space $P$:
\[
\TC(P)=\CC(P) \geq \CC^1(P) \geq \CC^2(P) \geq \cdots \geq \CC^{\infty}(P)=\SC(\K(P)) = \TC(|\K(P)|).
\]
For the face poset of a simplicial complex, a similar result to Theorem \ref{CC=SC} holds.

\begin{proposition}
For any finite simplicial complex $K$, it holds that $\CC^{\infty}(\chi(K))=\SC(K)=\TC(|K|)$.
\end{proposition}

\begin{proof}
Theorem \ref{CC=SC} and \cite[Theorem 2.6]{Gon} show that 
\[
\CC^{\infty}(\chi(K)) = \SC(\sd(K))=\TC(|\sd(K)|)=\TC(|K|) = \SC(K).
\]
\end{proof}

Let us focus on the properties of $\CC^{\infty}(P)$.

\begin{proposition}\label{h_properties}
Let $P$ and $Q$ be finite spaces.
\begin{enumerate}
\item $\CC^{\infty}(P)=1$ if and only if $P$ is weakly contractible.
\item $\CC^{\infty}(P)=\CC^{\infty}(Q)$ if $P$ and $Q$ are weakly homotopy equivalent.
\end{enumerate}
\end{proposition}

\begin{proof}
(1) By McCord's weak homotopy equivalence $|\K(P)| \to P$ (see \cite{McC66}), $P$ is weakly contractible if and only if $|\K(P)|$ is contractible. The result follows from Corollary \ref{CC=TC}. 

(2) If two finite spaces $P$ and $Q$ are weakly homotopy equivalent, then $|\K(P)|$ and $|\K(Q)|$ are homotopy equivalent by McCord's weak homotopy equivalence. It implies the following equality by Corollary \ref{CC=TC}:
\[
\CC^{\infty}(P) = \TC(|\K(P)|) = \TC(|\K(Q)|) = \CC^{\infty}(Q).
\]
\end{proof}

\begin{proposition}
For finite spaces $P$ and $Q$, the following product inequality holds:
\[
\CC^{\infty}(P \times Q) \leq \CC^{\infty}(P) + \CC^{\infty}(Q) -1.
\]
\end{proposition}

\begin{proof}
By Corollary \ref{CC=TC} and the product inequality for the topological complexity,
we have
\begin{align*}
\CC^{\infty}(P \times Q) &= \TC(|\K(P \times Q)|) \\
&= \TC(|\K(P)| \times |\K(Q)|) \\
&\leq \TC(|\K(P)|)+\TC(|\K(Q)|)-1 \\
&= \CC^{\infty}(P) + \CC^{\infty}(Q) -1.
\end{align*}
\end{proof}

The following proposition follows from the fact that $|\K(P)| \cong |\K(P^{\op})|$ for any finite space $P$.

\begin{proposition}
For any finite space $P$, it holds that $\CC^{\infty}(P)=\CC^{\infty}(P^{\op})$.
\end{proposition}

Let us compute the case $\calS^1$ in reference to \cite[Section 3]{Gon}.

\begin{example}
We have $\CC^{\infty}(\calS^1)=2$.
\end{example}

\begin{proof}
It suffices to show that $\CC^{2}(\calS^1)=2$. The finite space $\sd^2(\calS^1 \times \calS^1)$ is the face poset of the simplicial complex of torus $T^2$ shown in Figure \ref{figure3}, where the opposite sides of the boundary of the maximal square are identified.
	
	\begin{figure}[ht]
	\begin{center}
   \begin{tikzpicture}
\draw [fill,lightgray] (1,0) -- (4,3) -- (4,4) -- (3,4) -- (0,1)--(0,0);
\draw [fill,lightgray] (3,0) -- (4,1) -- (4,0) -- (3,0);
\draw [fill,lightgray] (0,3) -- (1,4) -- (0,4) -- (0,3);

\draw (0,0) -- (4,0) -- (4,4) -- (0,4) -- (0,0);
\draw (0,1) -- (4,1);
\draw (0,2) -- (4,2);
\draw (0,3) -- (4,3);
\draw (1,0) -- (1,4);
\draw (2,0) -- (2,4);
\draw (3,0) -- (3,4);
	
\draw (0,0) -- (4,4);
\draw (1,0) -- (4,3);
\draw (0,1) -- (3,4);
\draw (0,3) -- (1,4);
\draw (3,0) -- (4,1);

\begin{scope}[very thick]
\draw (0,2) -- (2,4);
\draw (2,0) -- (4,2);
\end{scope}

\draw (0,4) -- (4,0);
\draw (1,4) -- (4,1);
\draw (2,4) -- (4,2);
\draw (3,4) -- (4,3);
\draw (0,3) -- (3,0);
\draw (0,2) -- (2,0);
\draw (0,1) -- (1,0);

\draw (0,0) -- (0.5,1);
\draw (0,0.5) -- (1,1);
\draw (1,0) -- (0,1);
\draw (0.5,0) -- (1,1);
\draw (0,0) -- (1,0.5);
\draw (1,0.5) -- (2,0);
\draw (1,1) -- (1.5,0);
\draw (1,1) -- (2,0.5);
\draw (2,0) -- (1.5,1);
\draw (0.5,1) -- (0,2);
\draw (0,1.5) -- (1,1);
\draw (1,1) -- (0.5,2);
\draw (0,2) -- (1,1.5);
\draw (1.5,1) -- (2,2);
\draw (2,1.5) -- (1,1);
\draw (1,1) -- (1.5,2);
\draw (2,2) -- (1,1.5);

\draw (2,0) -- (2.5,1);
\draw (2,0.5) -- (3,1);
\draw (3,0) -- (2,1);
\draw (2.5,0) -- (3,1);
\draw (2,0) -- (3,0.5);
\draw (3,0.5) -- (4,0);
\draw (3,1) -- (3.5,0);
\draw (3,1) -- (4,0.5);
\draw (4,0) -- (3.5,1);
\draw (2.5,1) -- (2,2);
\draw (2,1.5) -- (3,1);
\draw (3,1) -- (2.5,2);
\draw (2,2) -- (3,1.5);
\draw (3.5,1) -- (4,2);
\draw (4,1.5) -- (3,1);
\draw (3,1) -- (3.5,2);
\draw (4,2) -- (3,1.5);

\draw (0,2) -- (0.5,3);
\draw (0,2.5) -- (1,3);
\draw (1,2) -- (0,3);
\draw (0.5,2) -- (1,3);
\draw (0,2) -- (1,2.5);
\draw (1,2.5) -- (2,2);
\draw (1,3) -- (1.5,2);
\draw (0,3.5) -- (2,2.5);
\draw (2,2) -- (1.5,3);
\draw (0.5,3) -- (0,4);
\draw (0,2.5) -- (1,3);
\draw (1,3) -- (0.5,4);
\draw (0,4) -- (1,3.5);
\draw (1.5,3) -- (2,4);
\draw (2,3.5) -- (1,3);
\draw (1,3) -- (1.5,4);
\draw (2,4) -- (1,3.5);

\draw (2,2) -- (2.5,3);
\draw (2,2.5) -- (3,3);
\draw (3,2) -- (2,3);
\draw (2.5,2) -- (3,3);
\draw (2,2) -- (3,2.5);
\draw (3,2.5) -- (4,2);
\draw (3,3) -- (3.5,2);
\draw (3,3) -- (4,2.5);
\draw (4,2) -- (3.5,3);
\draw (2.5,3) -- (2,4);
\draw (2,3.5) -- (3,3);
\draw (3,3) -- (2.5,4);
\draw (2,4) -- (3,3.5);
\draw (3.5,3) -- (4,4);
\draw (4,3.5) -- (3,3);
\draw (3,3) -- (3.5,4);
\draw (4,4) -- (3,3.5);

\draw (0,-0.3) node {$e^0_+$};
\draw (1,-0.3) node {$e^1_+$};
\draw (2,-0.3) node {$e^0_-$};
\draw (3,-0.3) node {$e^1_-$};
\draw (4,-0.3) node {$e^0_+$};

\draw (-0.3,0) node {$e^0_+$};
\draw (-0.3,1) node {$e^1_+$};
\draw (-0.3,2) node {$e^0_-$};
\draw (-0.3,3) node {$e^1_-$};
\draw (-0.3,4) node {$e^0_+$};

\draw (0,4.3) node {$e^0_+$};
\draw (1,4.3) node {$e^1_+$};
\draw (2,4.3) node {$e^0_-$};
\draw (3,4.3) node {$e^1_-$};
\draw (4,4.3) node {$e^0_+$};

\draw (4.3,0) node {$e^0_+$};
\draw (4.3,1) node {$e^1_+$};
\draw (4.3,2) node {$e^0_-$};
\draw (4.3,3) node {$e^1_-$};
\draw (4.3,4) node {$e^0_+$};
  \end{tikzpicture}  
  \end{center}
  \caption{Simplicial subdivision of the torus}
  \label{figure3}
 \end{figure}
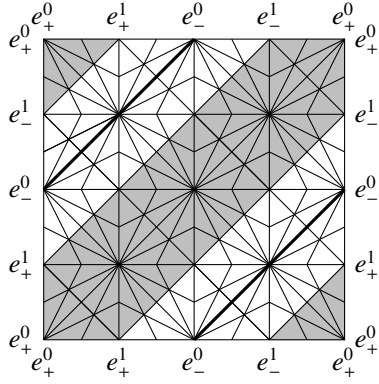

We can take two subcomplexes, shaded $K_1$ and unshaded $K_2$ in Figure \ref{figure3}. 
We notice that $K_1$ can be strongly collapsed \cite{BM12} onto the subcomplex of diagonal $\Delta=\{(x,x)\}$ in $T^2$. The face poset $\chi(K_1)$ is an open set of $\sd^2(\calS^1 \times \calS^1)$, and $\chi(\Delta)$ is a deformation retract of $\chi(K_1)$. The maps $\rho_1, \rho_2 \co \chi(\Delta) \to \calS^1$ are equal, and hence homotopic.

Similarly, $K_2$ can be strongly collapsed onto the core subcomplex $\nabla$ described as the thick line in Figure \ref{figure3}.
The face poset $\chi(\nabla)$ is a deformation retract of $\chi(K_2)$, and 
\[
\rho_j \co \chi(\nabla) \to \calS^1
\]
can be described as follows: 
\[
\rho_1(t) = \begin{cases}
e_+^{0}, & t=1, \\
e_+^{1}, & 2 \leq t \leq 8, \\
e_-^{0}, & t=9, \\
e_-^{1}, & 10 \leq t \leq 16,
\end{cases} \quad 
\rho_2(t) = \begin{cases}
e_-^{0}, & t=1, \\
e_-^{1}, & 2 \leq t \leq 8, \\
e_+^{0}, & t=9, \\
e_+^{1}, & 10 \leq t \leq 16,
\end{cases} 
\]
where we regard $\chi(\nabla)$ as a finite space with $16$ points formed by $1<2>3<\cdots <16>1$.
Moreover, we consider the following maps $f_i \co \chi(\nabla) \to \calS^1$ as described in Table \ref{f}.

\begin{table}[htb]
\begin{center}
  \begin{tabular}{|l||c|c|c|c|c|c|c|c|c|c|c|c|c|c|c|c|} \hline
     & 1&2 & 3 & 4 & 5& 6 & 7 & 8 & 9&10&11&12 &13&14&15&16\\ \hline 
    $\rho_1$ & $e^0_+$& $e^1_+$ & $e^1_+$ & $e^1_+$ & $e^1_+$ & $e^1_+$ & $e^1_+$& $e^1_+$ & $e^0_-$ & $e^1_-$ & $e^1_-$ & $e^1_-$ & $e^1_-$ & $e^1_-$ & $e^1_-$ & $e^1_-$ \\ \hline
    $f_1$ & $e^0_+$& $e^0_+$ & $e^0_+$ & $e^1_+$ & $e^0_-$ & $e^0_-$ & $e^0_-$& $e^0_-$ & $e^0_-$ & $e^0_-$ & $e^0_-$ & $e^1_-$ & $e^0_+$ & $e^0_+$ & $e^0_+$ & $e^0_+$ \\ \hline
        $f_2$ & $e^1_+$& $e^1_+$ & $e^1_+$ & $e^1_+$ & $e^0_-$ & $e^1_-$ & $e^1_-$& $e^1_-$ & $e^1_-$ & $e^1_-$ & $e^1_-$ & $e^1_-$ & $e^0_+$ & $e^1_+$ & $e^1_+$ & $e^1_+$ \\ \hline
        $f_3$ & $e^0_-$& $e^0_-$ & $e^0_-$ & $e^0_-$ & $e^0_-$ & $e^1_-$ & $e^0_+$& $e^0_+$ & $e^0_+$ & $e^0_+$ & $e^0_+$ & $e^0_+$ & $e^0_+$ & $e^1_+$ & $e^0_-$ & $e^0_-$ \\ \hline   
        $\rho_2$ & $e^0_-$& $e^1_-$ & $e^1_-$ & $e^1_-$ & $e^1_-$ & $e^1_-$ & $e^1_-$ & $e^1_-$ & $e^0_+$ & $e^1_+$ & $e^1_+$ & $e^1_+$ & $e^1_+$ & $e^1_+$ & $e^1_+$ & $e^1_+$ \\ \hline           
  \end{tabular}
\end{center}
\caption{The values of $f_i$ and $\rho_j$}
\label{f}
\end{table}

We notice that $\rho_1>f_1<f_2>f_3<\rho_2$, and these are homotopic. Hence, we have continuous sections on $\chi(K_1)$ and $\chi(K_2)$, respectively, and $\CC^{2}(\calS^1)=2$.
\end{proof}

For a finite space $P$, the inequality $\CC^k(P) \leq \CC(\sd^k(P))$ in Lemma \ref{CCk<CCsdk} implies that
\[
\CC^{\infty}(P) \leq \min_{k \geq 0} \{ \CC(\sd^k(P))\}.
\]
Both these inequalities can be strict. 
For example, if $P$ is weakly contractible and non--contractible, then $\sd^k(P)$ is not contractible for every $k \geq 0$ \cite{BM12}. Therefore, $\CC(\sd^k(P)) \geq 2$ for any $k \geq 0$. 
However, $\CC^{\infty}(P) =1$ by Proposition \ref{h_properties}.

In Example \ref{dDelta2}, we have seen a finite model $P$ of a circle such that
$\CC^{\infty}(P) =\TC(S^1)=2$ and $\CC(P)=3$. We do not know if $\CC(\sd^k(P))=3$ for every $k \geq 0$.

\begin{conjecture}
Let $P$ be a finite model of a circle in Example \ref{dDelta2}. Then $\CC(\sd^k(P))=3$ for every $k \geq 0$.
\end{conjecture}

\end{document}